\documentclass[noproofs]{rmj-public}
\usepackage[T1]{fontenc}
\usepackage{amsmath,amssymb,amsthm,mathtools,mathrsfs}
\usepackage{microtype}
\usepackage{enumitem}
\usepackage{xcolor}
\hypersetup{
    pdftitle={Analytic Properties of Self-Reproducing Families under Contour Convolution},
    pdfauthor={Gabriel Ben-Simon},
    pdfsubject={Self-reproduction, analytic extension, and finite Laurent support}
}
\renewcommand{\logobox}{}
\setlist{itemsep=3pt,topsep=5pt}
\renewcommand{\paragraph}[1]{\par\smallskip\noindent\textbf{#1}\enspace\ignorespaces}
\newtheorem{theorem}{Theorem}[section]
\newtheorem{proposition}[theorem]{Proposition}
\newtheorem{lemma}[theorem]{Lemma}
\newtheorem{corollary}[theorem]{Corollary}
\theoremstyle{definition}
\newtheorem{definition}[theorem]{Definition}

\theoremstyle{remark}

\newcommand{\C}{\mathbb C}
\newcommand{\Z}{\mathbb Z}

\newcommand{\Om}{\Omega_a}
\newcommand{\Ha}{\mathscr H_a}
\newcommand{\conv}{\mathbin{\diamond}}
\newcommand{\ind}{\operatorname{ind}}

\newcommand{\spanC}{\operatorname{span}_{\C}}

\newcommand{\dd}{\,d}

\title[Self-Reproducing Families under Convolution]{Analytic Properties of Self-Reproducing Families under Contour Convolution}
\author{Gabriel Ben-Simon}
\address{Afeka Academic College of Engineering, Tel Aviv, Israel}
\email{GavrielBe@afeka.ac.il}
\date{}
\keywords{Hadamard product; contour convolution; self-reproducing semigroups; finite Laurent support; Laurent series; idempotent projections; analytic extension.}
\subjclass{Primary 30H50; Secondary 20M14, 30B40, 46J20.}

\begin{document}
\begin{abstract}
This note grew out of two questions arising in different settings. The first concerns recognizing holomorphic functions among continuous functions on an annulus. The second concerns what self-reproducing subsets can tell us about semigroups of functions arising in analysis.

Our analytic starting point is a variation on the classical relation between holomorphy and invariance of contour integrals. It identifies the holomorphic functions as the largest class containing the constant function $1$ for which a contour convolution is independent of the chosen curve for every pair of functions. On this class, convolution amounts to multiplying Laurent coefficients.

We then study families $B$ satisfying $B\conv B=B$. We show that expressing a function as a convolution product of $m$ holomorphic functions corresponds precisely to its extension to a wider annulus determined by $m$. This identifies the largest self-reproducing family as the functions extending holomorphically to the punctured plane. We also prove that uniform boundedness on a fixed subannulus surrounding the unit circle forces all members of such a family to have Laurent expansions supported on one common finite set. A general semigroup theorem describes the minimal self-reproducing families.
\end{abstract}
\maketitle

\section{Introduction and Main Results}\label{sec:introduction}

This note grew out of two questions with rather different origins. The first arose from an earlier study, in an expository context,  of Cauchy-type integrals of continuous functions \cite{BenSimonCauchy}: how little independence of the integration path is needed to single out holomorphic functions among continuous functions on an annulus?

The second concerns commutative semigroups arising in analysis, especially semigroups of functions: what can their self-reproducing subsets tell us about their structure, particularly their idempotents, and what analytic properties might self-reproduction force?

The algebraic question arose while comparing structural features of different mathematical theories. Both directions entered the earlier investigation \cite{BenSimonCauchy}, and the present note develops part of the questions that emerged from that work.

During the preparation of this note, we also became aware of related work and several established results closely connected with our questions. We acknowledge these connections where they arise. We now describe the mathematical setting and the main conclusions, emphasizing the relations between them.

Fix $a>0$ and consider the reciprocal annulus
\[
  \Omega_a=\{z\in\C:e^{-a}<|z|<e^a\}.
\]
For $f,g\in C(\Omega_a)$, $z\in\Omega_a$, and a piecewise $C^1$ closed curve $\gamma$ of winding number $+1$ about the origin along which both $w$ and $z/w$ belong to $\Omega_a$, consider the contour operation
\begin{equation}\label{eq:introcontour}
 (f\conv_\gamma g)(z)=\frac{1}{2\pi i}\oint_\gamma
 f(z/w)g(w)\,\frac{\dd w}{w}.
\end{equation}
At this stage the functions are only continuous, and the operation may depend on the chosen curve.

We begin with the constant function $1$ and the single evaluation point $z=1$. Let $\Ha$ consist of all $f\in C(\Omega_a)$ for which
\begin{equation}\label{eq:introtest}
 (1\conv_\gamma f)(1)=\frac{1}{2\pi i}\oint_\gamma f(w)\,\frac{\dd w}{w}
\end{equation}
is independent of the winding-one curve $\gamma$ in $\Omega_a$.

This single test is sufficient. Lemma~3.2 uses integrals around polar rectangles to show that the radial Fourier coefficients are independent of the radius; coefficient estimates and Fourier uniqueness then give the Laurent expansion in Theorem~3.3. In particular,
\[
 \Ha=\mathcal O(\Omega_a).
\]
Proposition~4.2 gives the corresponding maximality statement: $\Ha$ is the largest subset of $C(\Omega_a)$ containing $1$ for which the contour products are independent of the choice of admissible curve for every pair of functions. Thus the holomorphic function space is recovered from a condition stated entirely for continuous functions.

Once this space has been obtained, Theorem~4.1 shows that the contour operation becomes the classical Hadamard product of Laurent coefficients:
\[
 c_n(f\conv g)=c_n(f)c_n(g),\qquad n\in\Z.
\]
The coefficientwise product is familiar. What matters here is that the space on which it acts has first been obtained from the contour condition, rather than assumed at the outset.

We now turn to the algebraic question. A nonempty family $B\subseteq\Ha$ is called \emph{self-reproducing} if
\[
 B\conv B=B.
\]
This requires both closure under the product and the factorization of every member into two members of the same family.

The main family-level conclusions concern analytic restrictions imposed by this condition. First, Theorem~5.2 identifies exact convolution powers with holomorphic extension to wider annuli:
\[
 \Ha^{\conv m}=\mathscr H_{ma}|_{\Omega_a}.
\]
The left side consists of actual $m$-fold products, without taking a linear span or closure. More precisely, every element of the right side is the restriction of the $m$th convolution power of a \emph{single} function in $\Ha$. Consequently, Theorem~5.3 identifies
\[
 \mathcal C_a=\mathcal O(\C^*)|_{\Omega_a},\qquad
 \mathcal C_a\conv\mathcal C_a=\mathcal C_a,
\]
as the largest self-reproducing family in $\Ha$. Every $B\subseteq\Ha$ with $B\conv B=B$ is contained in $\mathcal C_a$. Thus internal algebraic factorization forces analytic continuation from the original annulus to the punctured plane.

Second, under an additional boundedness hypothesis, a stronger conclusion holds. Theorem~7.2 states that if a self-reproducing family is uniformly bounded on one fixed two-sided subannulus, then there exists a single finite set $E\subset\Z$ such that every member of the family is a Laurent polynomial supported in $E$, with all its coefficients of modulus at most $1$. This is a common support for the entire family, which may itself be infinite; no linearity, closedness, or compactness is assumed. The proof uses Lemma~7.1, an observation about arbitrary families of sequences under coordinatewise multiplication, together with the uniform decay of Laurent coefficients.

The algebraic question also has a general answer, independent of the function-theoretic setting. Theorem~2.1, also established in \cite{BenSimonCauchy}, states that an inclusion-minimal nonempty self-reproducing subset of a commutative semigroup is exactly a singleton consisting of an idempotent. In $\Ha$, the idempotents are
\[
 P_E(z)=\sum_{n\in E}z^n,\qquad E\subset\Z\text{ finite},
\]
since their Laurent coefficients belong to $\{0,1\}$ and tend to zero at both ends. Hence the minimal self-reproducing families are exactly the singletons $\{P_E\}$. The coefficient calculation is elementary; the general semigroup theorem explains the reduction from minimal self-reproduction to idempotents. The individual powers $z^n$ are the primitive convolution idempotents and act as coefficient projections, whose finite sums reconstruct functions in $\Ha$.

The present note begins with the contour construction but concentrates on what self-reproduction requires of entire families of functions, with and without an additional boundedness assumption.

\section{Minimal self-reproduction in a commutative semigroup}\label{sec:selection}

Let $S$ be a commutative semigroup. For $B\subseteq S$, put
\[
 B^2=\{xy:x,y\in B\}.
\]
We call a nonempty $B$ \emph{self-reproducing} when
\begin{equation}\label{eq:selfreproduction}
 B^2=B.
\end{equation}
Thus the set is closed under multiplication, and every element is a product of two of its members. Here minimality means minimality under inclusion among nonempty sets satisfying \eqref{eq:selfreproduction}. 

\begin{theorem}[Minimal self-reproduction in a commutative semigroup]\label{thm:semigroup}
Let $S$ be a commutative semigroup and $B\subseteq S$ nonempty. The following are equivalent:
\begin{enumerate}[label=(\roman*)]
\item $B^2=B$, and no proper nonempty subset $C\subset B$ satisfies $C^2=C$;
\item $B=\{e\}$ for an idempotent $e\in S$.
\end{enumerate}
\end{theorem}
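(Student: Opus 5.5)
The implication (ii)$\Rightarrow$(i) is immediate, so the plan concentrates on (i)$\Rightarrow$(ii). If $e^2=e$ then $\{e\}^2=\{e\}$. The only nonempty subset of a singleton is the singleton itself, so there is no proper nonempty $C\subset\{e\}$ at all, and minimality holds vacuously.

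For (i)$\Rightarrow$(ii) the reduction is simple. Suppose $B$ is minimal self-reproducing. If some $e\in B$ satisfies $e^2=e$, then $\{e\}\subseteq B$ is nonempty and self-reproducing, and minimality forces $B=\{e\}$. So everything comes down to producing an idempotent inside a minimal $B$.

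To get it, I would exploit minimality by manufacturing self-reproducing subsets of $B$ from simple constructions and using commutativity. A first useful construction is the set of squares $B^{(2)}=\{b^2:b\in B\}$. Commutativity gives $b^2c^2=(bc)^2$, so $(B^{(2)})^2\subseteq B^{(2)}$. Conversely, writing $b=cd$ with $c,d\in B$ gives $b^2=c^2d^2\in(B^{(2)})^2$. Hence $B^{(2)}$ is self-reproducing, and since $B^{(2)}\subseteq B^2=B$, minimality yields $B=B^{(2)}$. The same argument works for every power map, so $B=\{b^n:b\in B\}$ for all $n\ge1$.

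A second construction is a countable factorization closure. Start from a single $x\in B$ and choose factorizations $x=y_1z_1$ in $B$. Then factor $y_1$ and $z_1$ in $B$, and continue, obtaining a set $T\subseteq B$ in which every element is a product of two elements of $T$. The subsemigroup $\langle T\rangle$ is then contained in $B$, is closed under products, and each generator lies in $\langle T\rangle^2$. So $\langle T\rangle$ is self-reproducing, and minimality gives $B=\langle T\rangle$. Combining this with $B=B^{(n)}$, I would try to show that some element $x\in B$ must satisfy $x\in x^2B$ or $x^k=x^{k+j}$. From such a relation a standard argument (a suitable power of $x$ in a finite cyclic part, or $x=x^2u$ giving the idempotent $xu$) would produce an idempotent lying in $B$.

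I expect this last step, extracting an idempotent from minimality, to be the genuine obstacle. The example $S=(\mathbb{Q}_{>0},+)$ with $B=S$ shows that self-reproduction alone never forces an idempotent. So any argument must use minimality in an essential way, most likely by exhibiting a \emph{strictly} smaller self-reproducing subset whenever $B$ contains no idempotent. What I would try first is to show that, for idempotent-free $B$, one of the sets constructed above is proper. Natural candidates are $B^{(2)}$, or sets of the form $\bigcup_{n\ge1}x^nB$ with $x\in B$, checked for properness by comparing where $x$ itself can lie. If neither is proper, I would look for a set such as $\{b\in B: xb=b\}$ combined with the factorization closure. I would check each candidate carefully against the $(\mathbb{Q}_{>0},+)$ example before committing to it.
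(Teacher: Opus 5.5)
Your proof of (ii)$\Rightarrow$(i), your reduction to finding an idempotent inside $B$, and your identity $B=\{b^n:b\in B\}$ for all $n\ge1$ are correct and match the paper. There is a genuine gap, however, at the step you flag yourself: you never produce the idempotent, only candidate sets to try. The factorization closure $\langle T\rangle$ does not help, because the factorizations $x=y_1z_1$ are arbitrary. So $\langle T\rangle$ is generated by many unrelated elements, and minimality $B=\langle T\rangle$ gives no relation among the powers of a single element. The candidate $B^{(2)}$ cannot be proper either, since you have already shown $B^{(2)}=B$.

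The missing idea is to factor each element as a square of a single root, using $B=B^{(2)}$. Pick $b_0\in B$ and a chain $b_{j+1}\in B$ with $b_{j+1}^2=b_j$, and set $D=\{b_j^q:j\ge0,\ q\ge1\}\subseteq B$. Then $D^2=D$:
\begin{itemize}
\item for $j\le k$ one has $b_j^q b_k^p=b_k^{q2^{k-j}+p}$, so $D$ is closed under products;
\item each element is a product of two members, since $b_j^q=(b_{j+1}^q)^2$.
\end{itemize}
Minimality therefore gives $D=B$, so $B$ consists of powers of one chain of square roots.

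Now use a root of a different order. From $B=B^{(3)}$ there is $c\in B$ with $c^3=b_0$. Writing $c=b_j^q$ gives $b_j^{3q}=b_0=b_j^{2^j}$, where $3q\neq2^j$. Thus two distinct positive powers $b_j^r=b_j^s$ with $r<s$ coincide. For any $k\ge r$ divisible by $s-r$, the power $b_j^k$ satisfies $b_j^{2k}=b_j^k$. This idempotent lies in $B$, and your reduction finishes the proof.

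This $D$ is exactly the strictly smaller self-reproducing set your $(\mathbb Q_{>0},+)$ test calls for. There $D$ is the set of positive dyadic multiples of $b_0$, which misses $b_0/3$.
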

\begin{proof}
For $m\geq1$ put $B^{[m]}=\{b^m:b\in B\}$. By commutativity,
$(B^{[m]})^2=(B^2)^{[m]}=B^{[m]}\subseteq B$; minimality gives
$B^{[m]}=B$. Choose $b_0\in B$ and successive square roots
$b_{j+1}^2=b_j$. Then
\[
D=\{b_j^q:j\geq0,\ q\geq1\}
\]
satisfies $D\subseteq B$ and $D^2=D$: products are powers of a common
later root, and $b_j^q=(b_{j+1}^q)^2$. Thus $D=B$.
Choose $c\in B$ with $c^3=b_0$ and write $c=b_j^q$. Then
$b_j^{3q}=b_0=b_j^{2^j}$, where $3q\neq2^j$.
If two distinct positive powers of an element agree, some power is
idempotent: from $u^r=u^s$ with $r<s$, choose $k\geq r$
divisible by $s-r$, giving $u^{2k}=u^k$.
Hence $B$ contains an idempotent $e$. Its singleton is self-reproducing,
so minimality implies $B=\{e\}$. The converse is immediate.
\end{proof}
The same proof appears in \cite[Section~3.3, Lemma~3.1]{BenSimonCauchy};
for related semigroup results, see Tamura~\cite{Tamura}.

\section{From contour integrals to Laurent series}\label{sec:arena}

Set $r=e^{-a}$ and $R=e^a$, so that $rR=1$ and $\Om=\{z:r<|z|<R\}$. We first derive the Laurent series from the condition involving the constant function, and then identify $\Ha$ with the holomorphic functions.

\subsection{The condition involving the constant function}

Write $e_n(z)=z^n$ for $n\in\Z$. For $f\in C(\Om)$, $n\in\Z$, and $r<\rho<R$, define
\begin{align}
 c_n(f;\rho)
 &=\frac{1}{2\pi i}\oint_{|w|=\rho}f(w)w^{-n-1}\dd w\label{eq:coefficient}\\
 &=\frac{\rho^{-n}}{2\pi}\int_0^{2\pi}
       f(\rho e^{it})e^{-int}\dd t.\nonumber
\end{align}
All circles have positive orientation. They can also be expressed through the contour operation:
\begin{equation}\label{eq:probeidentity}
 c_n(f;\rho)=(e_n\conv_{|w|=\rho}f)(1).
\end{equation}

\begin{definition}[The condition for the constant function]\label{def:constantcoherence}
Let $\Ha$ be the space of all $f\in C(\Om)$ such that
\begin{equation}\label{eq:constantcoherence}
 (1\conv_\gamma f)(1)=\frac{1}{2\pi i}\oint_\gamma f(w)\frac{\dd w}{w}
\end{equation}
has the same value for all piecewise $C^1$ closed curves $\gamma$ of winding number $+1$ in $\Om$.
\end{definition}

The condition for the constant function determines the radial Fourier coefficients. The argument is related to that in \cite[Section~5]{CD}.

\begin{lemma}[Independence of the radial coefficients]\label{lem:radialcoherence}
For every $f\in\Ha$ and every $n\in\Z$, the value $c_n(f;\rho)$ is independent of $\rho$. Denote it by $c_n(f)$.
\end{lemma}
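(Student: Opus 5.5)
We would use only the hypothesis in Definition~\ref{def:constantcoherence}: $\oint_\gamma f(w)\,dw/w$ is the same for every winding-one curve $\gamma$ in $\Om$. The idea, following the abstract's hint about polar rectangles, is to build curves whose difference of integrals isolates the quantity $\rho^{n}c_n(f;\rho)$, that is, the $n$th Fourier coefficient of $t\mapsto f(\rho e^{it})$. We work in polar coordinates $w=e^{s+it}$, so that $dw/w=ds+i\,dt$. Fix radii $r<\rho_1<\rho_2<R$ and put $s_k=\log\rho_k$. Set
\[
 \Phi(s)=\int_0^{2\pi} f(e^{s+it})\,e^{-int}\,dt,
\]
so that $c_n(f;\rho)=\rho^{-n}\Phi(\log\rho)/(2\pi)$. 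The claim is then that $e^{-ns}\Phi(s)$ is constant.

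\textbf{Step 1 (polar rectangles).} For $0\le\alpha<\beta\le2\pi$, let $\gamma$ run along the circle $|w|=\rho_1$, except that on the arc $\alpha\le t\le\beta$ it is replaced by three pieces: a radial segment out to $\rho_2$, the arc of $|w|=\rho_2$, and a radial segment back. This curve still has winding number $+1$. Comparing its integral with that of the full circle $|w|=\rho_1$ gives
\[
 \int_{\partial Q}f\,\frac{dw}{w}=0
\]
for every closed polar rectangle $Q=\{s_1\le s\le s_2,\ \alpha\le t\le\beta\}$. In the $(s,t)$ coordinates this says that the $1$-form $F(s,t)\,(ds+i\,dt)$, with $F(s,t)=f(e^{s+it})$, has vanishing integral around every rectangle. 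The $t$-coordinate is taken modulo $2\pi$, but the rectangles never wrap around, so this causes no trouble.

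\textbf{Step 2 (inserting $e^{-int}$ without differentiability).} We cannot differentiate $F$, so we use a weak argument. Vanishing on all rectangles means that $F$ is a weak solution of $\partial_t F=i\,\partial_s F$. Concretely, for every $C^1$ test function $\psi$ compactly supported in the open strip,
\[
 \iint F\,(\partial_t\psi - i\,\partial_s\psi)\,ds\,dt=0 .
\]
This follows by approximating $\psi$ with step functions on a rectangle grid, or equivalently by averaging the rectangle identity. Now take $\psi(s,t)=\chi(s)\,e^{-int}$, with $\chi\in C^1_c((s_1',s_2'))$ supported in $(\log r,\log R)$ and $t$ periodic. Periodicity in $t$ is fine, since the rectangle identity holds on every $t$-interval and so adjacent strips glue together. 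This gives
\[
 \int \Phi(s)\bigl(-in\chi(s)-i\chi'(s)\bigr)\,ds=0
\]
for all such $\chi$. Hence $\Phi'=n\Phi$ in the sense of distributions, so $\Phi(s)=Ce^{ns}$ by the standard fact that distributional solutions of a linear constant-coefficient ODE are classical. Since $\Phi$ is continuous, the identity holds pointwise. It follows that $\rho^{-n}\Phi(\log\rho)$ is constant, which is the statement of the lemma.

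\textbf{A more direct variant.} Instead of going through distributions, one can sum the rectangle identities over thin $t$-strips $[t_k,t_{k+1}]$ weighted by $e^{-int_k}$, then let the mesh tend to zero using uniform continuity of $f$ on the compact annulus $\rho_1\le|w|\le\rho_2$. The radial contributions telescope. What survives is
\[
 \Phi(s_2)-\Phi(s_1)=n\int_{s_1}^{s_2}\Phi(s)\,ds,
\]
which again gives $\Phi(s)=Ce^{ns}$ by Grönwall, or by uniqueness for this integral equation with continuous $\Phi$.

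\textbf{Main obstacle.} The delicate step is exactly this passage from the unweighted rectangle identity to the weighted one, with the Riemann-sum limit handled carefully. The key point is that the $ds$ parts of the radial edges do not cancel between neighbouring strips once the weights $e^{-int_k}$ differ. Their mismatch, of size $e^{-int_k}-e^{-int_{k+1}}\approx in\,\Delta t\,e^{-int_k}$, is precisely what produces the term $n\int\Phi$ after summation over $k$.
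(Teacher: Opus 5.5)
Your argument is correct, and its ``more direct variant'' is essentially the paper's proof: you obtain the polar-rectangle identity by comparing winding-one curves, then test against $e^{-int}$ and use periodicity to reach $\Phi(s_2)-\Phi(s_1)=n\int_{s_1}^{s_2}\Phi(s)\,ds$. The paper removes your ``main obstacle'' more simply, because the rectangle identity itself gives $J(t_1)-J(t_0)=i\int_{t_0}^{t_1}\bigl(F(s_2,t)-F(s_1,t)\bigr)\,dt$ for $J(t)=\int_{s_1}^{s_2}F(s,t)\,ds$; hence $J$ is $C^1$, and integrating $J'$ against $e^{-int}$ by parts is exact, with the boundary term vanishing by periodicity, so neither the weak formulation nor a Riemann-sum limit is needed.
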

\begin{proof}
The integral of $f(w)\dd w/w$ around a polar rectangle in $\Om$ vanishes. To see this, append its boundary to a winding-one loop with the same basepoint. The new loop is homotopic to the original one, so \eqref{eq:constantcoherence} makes the integral around the rectangle zero. 

Write $u(\rho,t)=f(\rho e^{it})$. Fix $r<\alpha<\beta<R$ and define $J(t)=\int_\alpha^\beta u(\rho,t)\frac{\dd\rho}{\rho}$.
The rectangle with radial sides at angles $t_0,t_1$ and circular sides at radii $\alpha,\beta$ has zero integral. Parameterizing its four sides gives $J(t_1)-J(t_0)=i\int_{t_0}^{t_1}\bigl(u(\beta,t)-u(\alpha,t)\bigr)\dd t$. By the fundamental theorem of calculus, $J$ is continuously differentiable, with $J'(t)=i\bigl(u(\beta,t)-u(\alpha,t)\bigr)$.

Set $A_n(\rho)=\frac1{2\pi}\int_0^{2\pi}u(\rho,t)e^{-int}\dd t$. Integrate the identity for $J'$ against $e^{-int}$. The boundary term vanishes by periodicity, and interchanging the integrals gives $A_n(\beta)-A_n(\alpha)=n\int_\alpha^\beta A_n(\rho)\frac{\dd\rho}{\rho}$.
This implies $A_n'(\rho)=nA_n(\rho)/\rho$, hence $\rho^{-n}A_n(\rho)$ is constant. By \eqref{eq:coefficient}, this is the desired independence of $c_n(f;\rho)$.
\end{proof}

\subsection{Derived Laurent representation}

Set $M_\rho(f)=\max_{|z|=\rho}|f(z)|$. The integral definition immediately implies
\begin{equation}\label{eq:coefficientbound}
 |c_n(f)|\leq M_\rho(f)\rho^{-n}
 \qquad(r<\rho<R,\ n\in\Z).
\end{equation}

\begin{theorem}[Laurent series from the contour condition]\label{thm:representation}
Every $f\in\Ha$ has the unique representation
\begin{equation}\label{eq:laurent}
 f(z)=\sum_{n\in\Z}c_n(f)z^n,
\end{equation}
where the series converges absolutely and uniformly on each compact subannulus of $\Om$. Conversely, any Laurent series with this convergence property defines a member of $\Ha$, with the displayed coefficients.
\end{theorem}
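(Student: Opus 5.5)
The proof has three parts: absolute convergence from the coefficient bounds, identification of the sum with $f$ through Fourier uniqueness on circles, and the converse.

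\emph{Convergence.} Fix a compact subannulus $\{r_1\le|z|\le R_1\}$ with $r<r_1<R_1<R$, and choose $\rho_1,\rho_2$ with $r<\rho_1<r_1$ and $R_1<\rho_2<R$. Lemma~\ref{lem:radialcoherence} makes $c_n(f)$ independent of the radius, so \eqref{eq:coefficientbound} may be applied at either radius. For $n\ge0$ use $\rho_2$: this gives $|c_nz^n|\le M_{\rho_2}(f)(R_1/\rho_2)^n$. For $n<0$ use $\rho_1$: this gives $|c_nz^n|\le M_{\rho_1}(f)(\rho_1/r_1)^{|n|}$. Both bounds are geometric series, and the Weierstrass M-test yields absolute and uniform convergence on the subannulus. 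Call the sum $g$. It is continuous on $\Om$.

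\emph{Identification $g=f$.} Fix $\rho\in(r,R)$. Uniform convergence on $|z|=\rho$ allows termwise integration, and since $\oint_{|w|=\rho}w^{m-n-1}\dd w=2\pi i\,\delta_{mn}$, we get $c_n(g;\rho)=c_n(f)=c_n(f;\rho)$ for every $n$. The function $t\mapsto f(\rho e^{it})-g(\rho e^{it})$ is continuous and $2\pi$-periodic, and all its Fourier coefficients vanish. By Fourier uniqueness for continuous functions (Fejér's theorem, or completeness of the trigonometric system in $L^2$) it vanishes identically. Since $\rho$ was arbitrary, $f=g$ on $\Om$. Uniqueness of the coefficients follows from the same termwise computation: any representation $f=\sum a_nz^n$ converging uniformly on circles forces $a_n=c_n(f;\rho)$.

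\emph{Converse.} Let $h=\sum a_nz^n$ converge absolutely and uniformly on compact subannuli. Then $h$ is continuous. Each term $a_nw^{n-1}$ with $n\neq0$ has a primitive $a_nw^n/n$, so its integral over any closed curve vanishes. The term $a_0w^{-1}$ contributes $a_0$ times the winding number, which is $a_0$. A closed piecewise $C^1$ curve $\gamma$ in $\Om$ has compact image, which lies in some compact subannulus, so the series may be integrated termwise along $\gamma$. Therefore $\frac1{2\pi i}\oint_\gamma h(w)\dd w/w=a_0$ for every winding-one $\gamma$, and $h\in\Ha$. The same termwise integration on circles gives $c_n(h)=a_n$.

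\emph{Main obstacle.} The only step beyond routine estimates is the Fourier uniqueness for continuous periodic functions. I would cite it as standard rather than reprove it. The remaining bookkeeping point is to choose two different radii, one on each side of the compact subannulus, to control the positive and negative indices separately.
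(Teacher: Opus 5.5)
Your proposal is correct and follows the paper's proof step for step: coefficient bounds at two radii on either side of the compact subannulus give normal convergence; termwise integration on circles plus Fourier uniqueness identifies the sum with $f$; and termwise integration along the compact image of a curve (primitives for $n\neq0$, winding number for $n=0$) gives the converse. You spell out the choice of radii and the uniqueness of the coefficients more explicitly than the paper does, but the argument is the same.
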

\begin{proof}
For $n\geq0$ on a compact subannulus, \eqref{eq:coefficientbound} bounds terms by a geometric series. The same holds for negative indices, proving normal convergence. Termwise integration on $|z|=\rho$ shows that the periodic functions $f(\rho e^{it})$ and the series sum share all Fourier coefficients. By continuous Fourier uniqueness (see \cite[Chapter I, Theorem 2.7]{Katznelson}), they are equal. Conversely, absolute uniform convergence on the compact image of any allowed curve permits termwise integration of the series divided by $w$. The terms $w^{n-1}$ have global primitives for $n\ne0$, while $\oint_\gamma\dd w/w=2\pi i\ind(\gamma,0)$. Thus every winding-one integral equals $c_0$, proving the defining condition; circle integration recovers all coefficients.
\end{proof}

In particular, $\Ha=\mathcal O(\Omega_a)$. Indeed, normal convergence of the Laurent series implies holomorphy (its differentiated series converge on smaller compact subannuli). Conversely, if $f$ is holomorphic, the integral of $f(w)\dd w/w$ depends only on the winding number, so $f\in\Ha$. We equip $\Ha$ with the compact-open topology. For every $b>0$, the notation $\mathscr H_b$ denotes the same construction on $\Omega_b$.

\begin{corollary}\label{cor:decay}
The map $f\mapsto(c_n(f))_{n\in\Z}$ is injective, and $c_n(f)\longrightarrow0$ as $|n|\longrightarrow\infty$.
\end{corollary}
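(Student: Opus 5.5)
Injectivity follows from the uniqueness part of Theorem~\ref{thm:representation}. If $c_n(f)=c_n(g)$ for all $n\in\Z$, then \eqref{eq:laurent} expresses $f$ and $g$ as the same series at every point of $\Om$, so $f=g$. Equivalently, since the map $f\mapsto(c_n(f))_n$ is linear, I will show that the kernel is trivial: a function whose coefficients all vanish is identically zero by \eqref{eq:laurent}.

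For the decay, I will use the coefficient bound \eqref{eq:coefficientbound} with a well-chosen radius. Fix a radius $\rho$ with $1<\rho<R$. This is possible because $R=e^a>1$, which is where the reciprocal normalization $rR=1$ enters. For $n\geq0$ the bound gives
\[
|c_n(f)|\leq M_\rho(f)\rho^{-n}.
\]
The quantity $M_\rho(f)$ is finite, since $f$ is continuous on the compact circle $|z|=\rho$. As $\rho^{-n}\to0$ when $n\to+\infty$, this proves $c_n(f)\to0$ as $n\to+\infty$.

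For negative indices I fix instead a radius $\rho'$ with $r<\rho'<1$, available since $r=e^{-a}<1$. Writing $n=-k$ with $k\geq0$, the bound \eqref{eq:coefficientbound} reads
\[
|c_{-k}(f)|\leq M_{\rho'}(f)\rho'^{\,k}.
\]
Because $\rho'<1$, the right side tends to $0$ as $k\to\infty$, which gives the decay as $n\to-\infty$.

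The bounds are in fact geometric, which is stronger than required. I do not expect a real obstacle in either part. The only point needing care is that radii on both sides of $1$ exist inside $\Om$, and this is guaranteed because $a>0$.
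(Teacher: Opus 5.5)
Your proof is correct and is essentially the argument the paper leaves implicit (the corollary is stated without a separate proof). Injectivity comes from the representation \eqref{eq:laurent} of Theorem~\ref{thm:representation}, and the decay from the geometric bound \eqref{eq:coefficientbound} on circles of radius slightly above and slightly below $1$, as in the normal-convergence step of that theorem. One small correction: what your argument actually uses is $r<1<R$, i.e.\ that the unit circle lies in $\Om$, not the exact reciprocity $rR=1$.
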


\section{Contour convolution and Hadamard multiplication}\label{sec:convolution}

For fixed $z\in\Om$, the allowed integration domain is
\[
\Om(z)=\{w:\max(r,|z|/R)<|w|<\min(R,|z|/r)\},
\]
which contains the unit circle. Here an admissible curve is any piecewise $C^1$ closed curve in $\Om(z)$, with arbitrary winding number. The next theorem gives the classical contour formula for Hadamard multiplication; see \cite{DS,FS}. The resulting coefficient algebra belongs to the setting of holomorphic Hadamard algebras studied by Render and Sauer~\cite{RSalg}, here with bilateral Laurent coefficients.

\begin{theorem}[Convolution and Laurent coefficients]\label{thm:product}
For $f,g\in\Ha$ and an admissible closed curve $\gamma$,
\begin{equation}\label{eq:generalproduct}
 (f\conv_\gamma g)(z)
 =\ind(\gamma,0)\sum_{n\in\Z}c_n(f)c_n(g)z^n.
\end{equation}
For a curve of winding number $+1$, the formula defines a product $f\conv g$ independent of the chosen curve, with $c_n(f\conv g)=c_n(f)c_n(g)$. The space $\Ha$ is closed under this bilinear, commutative, associative product, and $\ e_m\conv e_n=\delta_{mn}e_n$.
\end{theorem}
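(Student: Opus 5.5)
The plan is to substitute the Laurent expansions from Theorem~\ref{thm:representation} into the contour integral, multiply the two series, and integrate term by term. Fix $z\in\Om$ and an admissible closed curve $\gamma$, whose image is a compact set $K\subset\Om(z)$. For $w\in K$ both $w$ and $z/w$ lie in $\Om$. Because $w\mapsto z/w$ is continuous, the image $z/K$ is compact in $\Om$. Hence both $K$ and $z/K$ lie in some compact subannulus $\{\rho_1\le|\zeta|\le\rho_2\}$ of $\Om$.

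On this subannulus, Theorem~\ref{thm:representation} gives absolute and uniform convergence of
\[
f(z/w)=\sum_k c_k(f)z^kw^{-k}
\quad\text{and}\quad
g(w)=\sum_l c_l(g)w^l .
\]
The double series
\[
f(z/w)g(w)w^{-1}=\sum_{k,l}c_k(f)c_l(g)z^kw^{l-k-1}
\]
therefore converges absolutely and uniformly for $w\in K$. The uniform bound comes from the product of the two sums of suprema, $\sum_k\sup_K|c_k(f)z^kw^{-k}|$ and $\sum_l\sup_K|c_l(g)w^l|$, which is finite because each factor converges uniformly with a convergent geometric majorant coming from \eqref{eq:coefficientbound}. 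This justifies integrating term by term along $\gamma$.

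For $l\ne k$, the monomial $w^{l-k-1}$ has a global primitive on $\C^*$, so its integral over the closed curve vanishes. For $l=k$, the integral is $2\pi i\,\ind(\gamma,0)$. Only the diagonal terms survive, and this yields \eqref{eq:generalproduct}. I expect the main obstacle to be exactly this justification: choosing the compact subannulus containing both $K$ and $z/K$, and confirming that the product double series is dominated uniformly on $K$. Everything else is bookkeeping.

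For a winding-one curve, the formula depends on $\gamma$ only through $\ind(\gamma,0)=1$, so $f\conv g$ is well defined. I must also check that $h(z)=\sum_n c_n(f)c_n(g)z^n$ lies in $\Ha$ and has these coefficients. By Corollary~\ref{cor:decay}, $c_n(g)$ is bounded, say by $C$, so $|c_n(f)c_n(g)z^n|\le C|c_n(f)z^n|$. Thus the series for $h$ converges absolutely and uniformly on compact subannuli of $\Om$. The converse part of Theorem~\ref{thm:representation} then gives $h\in\Ha$ with $c_n(h)=c_n(f)c_n(g)$.

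The algebraic properties follow from injectivity of the coefficient map (Corollary~\ref{cor:decay}), because coefficientwise multiplication of sequences is bilinear, commutative and associative. For the last identity, $c_n(e_m)=\delta_{nm}$, so $c_k(e_m\conv e_n)=\delta_{km}\delta_{kn}$, which gives $e_m\conv e_n=\delta_{mn}e_n$.
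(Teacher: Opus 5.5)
Your proposal is correct and follows the same route as the paper. You substitute both Laurent expansions, justify term-by-term integration of the normally convergent double series on the compact image of $\gamma$, and keep only the diagonal terms via $\frac{1}{2\pi i}\oint_\gamma w^{l-k-1}\,dw=\delta_{kl}\ind(\gamma,0)$. You then bound $c_n(g)$ to place the product in $\Ha$: you use Corollary~\ref{cor:decay} where the paper uses $|c_n(g)|\le M_1(g)$, and you write out details the paper leaves implicit, namely the compact subannulus containing $K$ and $z/K$, and the injectivity argument for bilinearity, commutativity, associativity and $e_m\conv e_n=\delta_{mn}e_n$.
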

\begin{proof}
Along an admissible compact curve, both expansions converge absolutely and uniformly, as does their double product series. Multiplying and integrating term by term against $\frac{1}{2\pi i}\oint_\gamma w^{n-m-1}\dd w = \delta_{n,m} \ind(\gamma,0)$ gives \eqref{eq:generalproduct}. The coefficient bound on the unit circle gives $|c_n(g)|\leq M_1(g)$, so the resulting series converges normally and its sum belongs to $\Ha$.
\end{proof}

This yields the maximality statement promised in the introduction.

\begin{proposition}[Maximality of the holomorphic function space]\label{prop:maximalarena}
Suppose $V\subset C(\Om)$ contains the constant function $1$, and the contour product of every pair of elements of $V$ is unchanged under admissible homotopies. Then $V\subseteq\Ha$. Conversely, $\Ha$ contains $1$ and all the powers $e_n$, is closed under the product, and has the stated homotopy-independence property. It is therefore the largest such function space containing the constant function $1$.
\end{proposition}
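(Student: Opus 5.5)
The first part, $V\subseteq\Ha$, reduces to Definition~\ref{def:constantcoherence} once we read the hypothesis with one factor equal to $1$. Let $f\in V$. Since $1\in V$, the contour product $(1\conv_\gamma f)(z)$ is unchanged under admissible homotopies, and we evaluate it at $z=1$. Then $\Om(1)=\Om$, so the admissible curves are exactly the closed curves in $\Om$, and
\[
(1\conv_\gamma f)(1)=\frac{1}{2\pi i}\oint_\gamma f(w)\,\frac{\dd w}{w}.
\]
In the annulus $\Om$, any two closed curves of winding number $+1$ are freely homotopic, since the free homotopy classes of loops in $\Om$ correspond to $\pi_1(\Om)\cong\Z$, classified by winding number. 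Invariance under admissible homotopies therefore makes the integral take a single value over all winding-one curves. This is exactly condition \eqref{eq:constantcoherence}, so $f\in\Ha$.

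For the converse we collect facts already proved.
\begin{enumerate}[label=(\alph*)]
\item $1=e_0$ and every $e_n$ belong to $\Ha$, as convergent (finite) Laurent series by the converse part of Theorem~\ref{thm:representation}.
\item Closure under $\conv$ is part of Theorem~\ref{thm:product}.
\item Homotopy independence also comes from Theorem~\ref{thm:product}. For $f,g\in\Ha$ and any admissible $\gamma$ in $\Om(z)$, \eqref{eq:generalproduct} shows that $(f\conv_\gamma g)(z)$ depends on $\gamma$ only through $\ind(\gamma,0)$, which is a homotopy invariant within $\Om(z)\subset\C^*$.
\end{enumerate}
Hence $\Ha$ has the stated property, and the first part shows that every $V$ with the property lies in $\Ha$. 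This is the maximality claim.

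The only delicate point is the homotopy step in the first part. We must ensure that the admissible homotopies allowed in the hypothesis, namely homotopies through piecewise $C^1$ closed curves in $\Om(z)$, suffice to connect any two winding-one curves. If the hypothesis is read with basepoint-free homotopies, the classification of free loops in an annulus handles this directly. To stay within piecewise $C^1$ curves, one can first deform each curve radially onto the unit circle traversed once, using the homotopy $w\mapsto w/|w|^{s}$ for $s\in[0,1]$. This deformation keeps the curve inside $\Om$ and preserves piecewise smoothness. The argument is then finished by a reparametrization-invariance observation: curves on the unit circle with the same winding number are connected by a smooth homotopy of their angle functions.
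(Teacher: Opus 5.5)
Your proposal is correct and follows essentially the same route as the paper. For the first part, you apply the hypothesis to the pair $(1,f)$ at $z=1$ and use $\Om(1)=\Om$ together with the fact that all winding-one loops in the annulus are freely homotopic to reach Definition~\ref{def:constantcoherence}; for the converse, you cite Theorem~\ref{thm:product}. Your radial deformation $w\mapsto w/|w|^{s}$ followed by interpolation of angle functions makes the free-homotopy step explicit within piecewise $C^1$ curves, a detail the paper states without proof.
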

\begin{proof}
For $f\in V$, apply the assumption to the pair $(1,f)$ at $z=1$. At $z=1$, reciprocity gives $\Om(1)=\Om$, and all winding-one loops in this annulus are freely homotopic. Thus Definition~\ref{def:constantcoherence} applies, forcing $f\in\Ha$. The converse is established by Theorem~\ref{thm:product}.
\end{proof}

\section{Exact factorization and analytic extension}\label{sec:factorization}

The decay estimates for Laurent coefficients show that coefficientwise products extend to wider annuli. The next results make this precise. For $0<b<a$, set
\[
 p_b(f)=\sup_{e^{-b}\leq|z|\leq e^b}|f(z)|,
 \qquad q_b(c)=\sup_{n\in\Z}|c_n|e^{b|n|}.
\]

\begin{lemma}\label{lem:weights}
A bilateral sequence $(c_n)_{n\in\Z}$ is the Laurent coefficient sequence of an element of $\Ha$ if and only if $q_b(c)<\infty$ for every $0<b<a$. Moreover,
\[
q_b(c(f))\leq p_b(f),\qquad
p_b(f)\leq q_d(c(f))\sum_{n\in\Z}e^{-(d-b)|n|}\quad(0<b<d<a).
\]
\end{lemma}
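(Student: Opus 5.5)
The plan is to prove the two displayed inequalities first and then obtain the characterization from them together with Theorem~\ref{thm:representation} and Corollary~\ref{cor:decay}.

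\emph{First inequality.} For $f\in\Ha$ I will use the coefficient bound \eqref{eq:coefficientbound} with $\rho=e^{b}$ and with $\rho=e^{-b}$. Both radii lie in $(r,R)$ because $b<a$. This gives
\[
|c_n(f)|\le M_{e^{b}}(f)\,e^{-bn}\quad\text{and}\quad |c_n(f)|\le M_{e^{-b}}(f)\,e^{bn}.
\]
For $n\ge0$ I use the first bound, and for $n<0$ the second. In both cases $|c_n(f)|e^{b|n|}\le p_b(f)$, since each circle lies in the closed subannulus over which $p_b$ takes its supremum. Taking the supremum over $n$ yields $q_b(c(f))\le p_b(f)$.

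\emph{Second inequality.} Let $0<b<d<a$ and $e^{-b}\le|z|\le e^{b}$. Then $|z^n|\le e^{b|n|}$ for every $n\in\Z$. By Theorem~\ref{thm:representation}, $f(z)=\sum_n c_n(f)z^n$, and each term satisfies
\[
|c_n(f)z^n|\le q_d(c(f))\,e^{-d|n|}e^{b|n|}.
\]
Summing over $n$ and taking the supremum over $z$ gives the second inequality. The series $\sum_n e^{-(d-b)|n|}$ is finite because $d-b>0$.

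\emph{Characterization.} The "only if" direction is immediate from the first inequality, since $p_b(f)<\infty$ by continuity of $f$ on a compact set. For the "if" direction, suppose $q_b(c)<\infty$ for all $b<a$. Let $K$ be a compact subannulus of $\Om$. It lies in some $\{e^{-b}\le|z|\le e^{b}\}$ with $b<a$; to see this, take $b$ at least the larger of $\log\max_K|z|$ and $-\log\min_K|z|$. Now pick $d$ with $b<d<a$. The same termwise estimate as above shows that $\sum_n c_nz^n$ converges absolutely and uniformly on $K$, by the Weierstrass M-test. The converse part of Theorem~\ref{thm:representation} then says that the sum is a member of $\Ha$ whose Laurent coefficients are exactly the $c_n$.

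\emph{Main obstacle.} There is no real obstacle. The only point needing care is choosing the intermediate exponent $d$ strictly between $b$ and $a$, which is what makes the geometric sum finite.
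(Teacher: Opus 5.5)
Your proposal is correct and follows the paper's own proof. The paper likewise uses the circles $|z|=e^{\pm b}$ for the first inequality and termwise summation of $|c_nz^n|\le q_d(c)e^{-(d-b)|n|}$ for the second, then obtains the converse from Theorem~\ref{thm:representation}; you simply spell out the details the paper leaves implicit, such as enclosing a compact set in a closed subannulus and choosing the intermediate width $d$.
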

\begin{proof}
Use the circles $|z|=e^{\pm b}$ in \eqref{eq:coefficient} for the first inequality, and sum $|c_nz^n|\leq q_d(c)e^{-(d-b)|n|}$ for the second. The latter also proves the converse by Theorem~\ref{thm:representation}.
\end{proof}

The coefficientwise root argument is familiar from weighted-sequence algebras; see \cite[Lemma~7.7]{Pirk}. Here it gives the exact relation between convolution powers and annular widths.

\begin{theorem}[Exact powers]\label{thm:powers}
For every positive integer $m$, $\Ha^{\conv m}:=\{f_1\conv\cdots\conv f_m:f_j\in \Ha\} = \mathscr H_{ma}|_{\Omega_a}$. Every $F\in \mathscr H_{ma}$ has a convolution $m$th root $g\in \Ha$: $g^{\conv m}=F|_{\Omega_a}$.
\end{theorem}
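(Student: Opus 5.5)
The plan is to work entirely at the level of Laurent coefficients, using Lemma~\ref{lem:weights} and its analogue for $\mathscr H_{ma}$ on $\Omega_{ma}$. By Theorem~\ref{thm:product} and induction, $c_n(f_1\conv\cdots\conv f_m)=\prod_{j}c_n(f_j)$. The equality therefore becomes a statement about bilateral sequences: a product of $m$ sequences satisfying $q_b<\infty$ for all $b<a$ is exactly a sequence satisfying $q_B<\infty$ for all $B<ma$. To make sense of the restriction, I use that an element $F\in\mathscr H_{ma}$ has the same Laurent coefficients as $F|_{\Omega_a}$. This follows because $c_n(F;\rho)$ computed on circles inside $\Omega_a$ agrees for both, and Theorem~\ref{thm:representation} then applies.

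For the inclusion $\subseteq$, let $f_j\in\Ha$ and $0<B<ma$. Put $b=B/m<a$. Lemma~\ref{lem:weights} gives $|c_n(f_j)|\le p_b(f_j)e^{-b|n|}$, so
\[
\Bigl|\prod_j c_n(f_j)\Bigr|\le \Bigl(\prod_j p_b(f_j)\Bigr)e^{-B|n|}.
\]
Hence $q_B(c)<\infty$ for every $B<ma$. Lemma~\ref{lem:weights}, applied with $a$ replaced by $ma$, shows that $c$ is the coefficient sequence of some $F\in\mathscr H_{ma}$. Both $F|_{\Omega_a}$ and the product are given by the same Laurent series on $\Omega_a$, so they coincide.

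For the inclusion $\supseteq$, I would construct a single $m$th root, which gives both claims of the theorem at once. Given $F\in\mathscr H_{ma}$ with coefficients $C_n$, define $d_n$ to be any complex $m$th root of $C_n$, for instance the principal one, so that $|d_n|=|C_n|^{1/m}$. For $b<a$, take $B=mb<ma$. Then
\[
|d_n|e^{b|n|}=\bigl(|C_n|e^{B|n|}\bigr)^{1/m}\le q_B(C)^{1/m},
\]
so $q_b(d)<\infty$ for all $b<a$. Lemma~\ref{lem:weights} yields $g\in\Ha$ with $c_n(g)=d_n$. Then $c_n(g^{\conv m})=d_n^m=C_n=c_n(F|_{\Omega_a})$, and injectivity (Corollary~\ref{cor:decay}) gives $g^{\conv m}=F|_{\Omega_a}$.

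There is no serious obstacle here; the argument is straightforward once everything is phrased in coefficients. The step that needs the most care is the bookkeeping at the two ends of the parameter ranges. The open conditions ``for every $b<a$'' and ``for every $B<ma$'' correspond exactly under $B=mb$, and this is what makes the identity an exact equality rather than an inclusion up to endpoints. The other point to check is that restriction from $\Omega_{ma}$ to $\Omega_a$ preserves the coefficients, as noted above.
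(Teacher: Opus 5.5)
Your proposal is correct and follows essentially the same route as the paper. You bound the product coefficients with the $q_b$ weights at $b=t/m$ for one inclusion. For the other, you take an arbitrary coefficientwise $m$th root, with $q_b(r)=q_{mb}(c(F))^{1/m}$. You also spell out the restriction-preserves-coefficients and injectivity steps that the paper leaves implicit.
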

\begin{proof}
For $0<t<ma$, set $b=t/m<a$. If $d_n=\prod_{j=1}^m c_n(f_j)$, then $|d_n|e^{t|n|}\leq\prod_{j=1}^m q_b(c(f_j))$, so the product extends to $\Omega_{ma}$. Conversely, for each $n$ choose any complex $m$th root $r_n$ of $c_n(F)$. For $0<b<a$, $\sup_n |r_n|e^{b|n|}=q_{mb}(c(F))^{1/m}<\infty$. The resulting Laurent series defines $g\in \Ha$ with the required coefficientwise power.
\end{proof}

This equality describes classes of restrictions; it does not say that $\Omega_{ma}$ is the maximal domain of holomorphic continuation of any individual product.

The intersection of all powers is a standard semigroup construction (see \cite{RW}). In this algebra it has the following analytic description.

\begin{theorem}[Largest self-reproducing family]\label{thm:core}
Let $\mathscr H_\infty=\mathcal O(\C^*)$. Put $\mathcal C_a=\mathscr H_\infty|_{\Omega_a}$. Then
\[
\mathcal C_a\conv\mathcal C_a=\mathcal C_a,
\qquad \bigcap_{m\geq1}\Ha^{\conv m}=\mathcal C_a.
\]
Every subset $B\subset \Ha$ satisfying $B\conv B=B$ is contained in $\mathcal C_a$. Consequently $\mathcal C_a$ is the unique largest such subset. It is proper, dense, and nonclosed in $\Ha$.
\end{theorem}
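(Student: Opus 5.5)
The proof will run in Laurent coefficients throughout, using Lemma~\ref{lem:weights} and Theorem~\ref{thm:powers} as the dictionary between growth of coefficients and annular width. First, a function $f\in\Ha$ lies in $\mathcal C_a$ exactly when $q_b(c(f))<\infty$ for every $b>0$: the Laurent series then converges normally on every compact subannulus of $\C^*$. Uniqueness of the coefficients (Corollary~\ref{cor:decay}) makes the restriction map injective.

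\emph{The intersection.} By Theorem~\ref{thm:powers}, $f\in\Ha^{\conv m}$ if and only if $f$ extends to $\mathscr H_{ma}$. So $f$ lies in every $\Ha^{\conv m}$ if and only if $q_b(c(f))<\infty$ for all $b<ma$ and all $m$, that is, for all $b>0$. This gives $\bigcap_m\Ha^{\conv m}=\mathcal C_a$.

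\emph{Self-reproduction of $\mathcal C_a$.} For closure, if $f,g\in\mathcal C_a$ then $q_b(c(f)c(g))\le q_b(c(f))\,q_b(c(g))<\infty$ for every $b$. For factorization, given $F\in\mathcal C_a$, choose square roots $r_n$ of $c_n(F)$. Then $\sup_n|r_n|e^{b|n|}=q_{2b}(c(F))^{1/2}<\infty$ for all $b$, so $g=\sum r_nz^n\in\mathcal C_a$ and $g\conv g=F$.

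\emph{Maximality.} Let $B\conv B=B$. By induction $B=B^{\conv m}$ for every $m$: indeed $B^{\conv (m+1)}=B^{\conv m}\conv B=B\conv B=B$. Hence $B\subseteq\Ha^{\conv m}$ for all $m$, and so $B\subseteq\mathcal C_a$ by the intersection identity. Uniqueness of a largest such family follows, since $\mathcal C_a$ is itself one and contains all others.

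\emph{Topological claims.} For properness, take $f(z)=\sum_n e^{-a|n|}\,|n|^{-2}z^n$ (say with $c_0=0$). Then $q_b<\infty$ for $b<a$ but not for $b>a$, so $f\in\Ha\setminus\mathcal C_a$.

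For density, Laurent polynomials lie in $\mathcal C_a$. The truncations $\sum_{|n|\le N}c_n(f)z^n$ converge to $f$ uniformly on compact subannuli by the normal convergence in Theorem~\ref{thm:representation}, hence in the compact-open topology.

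Non-closedness is then immediate: a dense proper subset cannot be closed.

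The only step needing any care is the example for properness, where one must make sure the chosen coefficients really satisfy the criterion of Lemma~\ref{lem:weights} for every $b<a$ while violating it beyond $a$. With $c_n=e^{-a|n|}|n|^{-2}$, for $b<a$ we have $|c_n|e^{b|n|}=e^{-(a-b)|n|}|n|^{-2}$, which is bounded. For $b>a$ the same quantity tends to infinity, so the function is not even in $\mathscr H_{b}$ for any $b>a$.
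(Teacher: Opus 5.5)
Your proof is correct and follows essentially the paper's route: Theorem~\ref{thm:powers} for the intersection, $B=B^{\conv m}$ for maximality, coefficientwise square roots with $q_b(r)=q_{2b}(c(F))^{1/2}$ for self-reproduction, and Laurent truncations for density. The differences are cosmetic. You characterize $\mathcal C_a$ by the condition $q_b(c(f))<\infty$ for all $b>0$ instead of gluing the extensions $F_m$, and you prove properness with an explicit coefficient sequence rather than $(e^a-z)^{-1}$; the factor $|n|^{-2}$ is not needed, since $c_n=e^{-a|n|}$ already works.
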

\begin{proof}
If $f\in\bigcap_{m\geq1}\Ha^{\conv m}$, Theorem~\ref{thm:powers} supplies extensions $F_m\in\mathscr H_{ma}$. Their Laurent coefficients are those of $f$, so they agree on overlaps and glue on $\bigcup_m\Omega_{ma}=\C^*$; hence $f\in\mathcal C_a$. Associativity and $B^2=B$ imply $B^m=B$, so every such $B$ is contained in this intersection. Conversely, products preserve the coefficient estimates for every width $b>0$. If $r_n^2=c_n(F)$ for $F\in\mathscr H_\infty$, then $q_b(r)=q_{2b}(c(F))^{1/2}<\infty$ for every $b>0$. These roots therefore define an element of $\mathscr H_\infty$, proving self-reproduction of $\mathcal C_a$ and its inclusion in every power. Density and properness follow from Laurent polynomials and $(e^a-z)^{-1}$, respectively.
\end{proof}

\section{Idempotents and minimal self-reproduction}\label{sec:classification}

Set $X=\Ha$ with the commutative product of Theorem~\ref{thm:product}. For finite $E\subset\Z$, define $P_E(z)=\sum_{n\in E}z^n$, with $P_\varnothing=0$. On the unit circle, $\conv$ is normalized periodic convolution and $P_E$ restricts to a classical idempotent trigonometric polynomial; see \cite{BR}. Related decompositions into primitive idempotents occur in the theory of convolution projections on compact groups \cite[Introduction]{AGST}.

\begin{proposition}\label{prop:idempotents}
The convolution idempotents of $X$ are exactly the $P_E$ with $E$ finite.
\end{proposition}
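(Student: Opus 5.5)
The plan is to pass to Laurent coefficients, where the convolution product is diagonal. By Theorem~\ref{thm:product}, for $f\in X$ we have $c_n(f\conv f)=c_n(f)^2$ for every $n\in\Z$. By the injectivity in Corollary~\ref{cor:decay}, the equation $f\conv f=f$ is therefore equivalent to the scalar equations $c_n(f)^2=c_n(f)$ for all $n$. Each of these says exactly that $c_n(f)\in\{0,1\}$.

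Set $E=\{n\in\Z:c_n(f)=1\}$. By the decay statement of Corollary~\ref{cor:decay}, $c_n(f)\to0$ as $|n|\to\infty$. A sequence taking only the values $0$ and $1$ can tend to zero only if it equals $1$ for finitely many indices, so $E$ is finite. The Laurent representation of Theorem~\ref{thm:representation} then gives $f=\sum_{n\in E}z^n=P_E$.

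For the converse, take a finite set $E$. Then $P_E$ is a Laurent polynomial, so it lies in $\Ha$ by Theorem~\ref{thm:representation}, or directly because it is holomorphic on $\Om$. Its coefficients are the indicator of $E$, which is idempotent under coordinatewise multiplication. Hence $P_E\conv P_E=P_E$. Equivalently, $e_m\conv e_n=\delta_{mn}e_n$ together with bilinearity gives $P_E\conv P_E=P_E$. The case $E=\varnothing$ yields the idempotent $0$.

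I do not expect a genuine obstacle. The only point needing care is that finiteness of $E$ comes from the coefficient decay in Corollary~\ref{cor:decay}, not from any algebraic property. Without the decay, indicator sequences of infinite sets would also be coefficientwise idempotent, but they do not correspond to elements of $\Ha$.
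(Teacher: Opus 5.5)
Your proof is correct and follows the paper's argument. Coefficientwise multiplication forces $c_n(f)\in\{0,1\}$, the decay in Corollary~\ref{cor:decay} makes the support finite, and the converse is immediate from coefficient multiplication; you also make explicit the injectivity and Laurent-representation steps identifying $f$ with $P_E$, which the paper leaves implicit.
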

\begin{proof}
If $p\conv p=p$, coefficient multiplication implies $c_n(p)^2=c_n(p)$, so $c_n(p)\in\{0,1\}$. By Corollary~\ref{cor:decay}, only finitely many coefficients can equal $1$. Conversely, coefficient multiplication gives $P_E\conv P_E=P_E$ for every finite $E$.
\end{proof}

\begin{theorem}[Classification of minimal self-reproduction]\label{thm:minimalclassification}
A nonempty $B\subseteq X$ is inclusion-minimal among subsets satisfying $B^2=B$ if and only if
\begin{equation}\label{eq:classification}
 \boxed{\ B=\{P_E\}\quad\text{for a finite }E\subset\Z.\ }
\end{equation}
\end{theorem}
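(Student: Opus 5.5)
The plan is to derive the statement directly from Theorem~\ref{thm:semigroup}, applied to the commutative semigroup $S=X$. The needed structure comes from Theorem~\ref{thm:product}: $\conv$ is an associative and commutative binary operation on $\Ha$, so $(X,\conv)$ is a commutative semigroup. In this semigroup $B^2=B\conv B$, so \emph{self-reproducing} and \emph{inclusion-minimal} have the same meaning here as in Section~\ref{sec:selection}.

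For the forward direction, let $B\subseteq X$ be nonempty and inclusion-minimal among subsets with $B^2=B$. Then condition (i) of Theorem~\ref{thm:semigroup} holds, so $B=\{e\}$ for an idempotent $e\in X$, that is, $e\conv e=e$. Proposition~\ref{prop:idempotents} identifies every such $e$ as $P_E$ for some finite $E\subset\Z$. Hence $B=\{P_E\}$.

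For the converse, fix a finite $E$. By Proposition~\ref{prop:idempotents}, $P_E$ is idempotent, which gives $\{P_E\}^2=\{P_E\conv P_E\}=\{P_E\}$. Minimality is automatic for a singleton: its only nonempty subset is itself, so no proper nonempty subset can be self-reproducing. This is implication (ii)$\Rightarrow$(i) of Theorem~\ref{thm:semigroup}.

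I expect no real obstacle, since the analytic content is already contained in Proposition~\ref{prop:idempotents}. That content is that $c_n(p)\in\{0,1\}$, together with the decay in Corollary~\ref{cor:decay}, which forces finite support. The one point to check is that the semigroup hypotheses of Theorem~\ref{thm:semigroup} actually hold, namely that $\Ha$ is closed under $\conv$ and that the product is associative and commutative. Both are stated in Theorem~\ref{thm:product}. The empty set $E=\varnothing$, giving $P_\varnothing=0$, is included consistently, since $\{0\}$ is a minimal self-reproducing singleton.
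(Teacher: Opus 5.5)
Your proposal is correct and follows the same route as the paper's proof. You apply Theorem~\ref{thm:semigroup} to the commutative semigroup $(X,\conv)$ to reduce to singleton idempotents, and then use Proposition~\ref{prop:idempotents} to identify these idempotents as the $P_E$, checking explicitly (via Theorem~\ref{thm:product}) the semigroup hypotheses and the trivial converse that the paper leaves implicit.
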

\begin{proof}
Theorem~\ref{thm:semigroup} identifies the sets as singleton idempotents, and Proposition~\ref{prop:idempotents} exactly classifies them.
\end{proof}

Minimality cannot be omitted: $\{0\}\cup\{e_n:n\in\Z\}$ is self-reproducing,
but each $\{e_n\}$ is a proper self-reproducing subset.

\section{Bounded families and common finite support}\label{sec:bounded}

\begin{lemma}[Uniform tails and finite support]\label{lem:uniformtail}
Let $I$ be an index set and $B\subset\C^I$ be nonempty, with coordinatewise multiplication and $B^2=B$. Suppose $\alpha_i=\sup_{x\in B}|x_i|<\infty$ for every $i$, and for every $\varepsilon>0$ only finitely many $i$ satisfy $\alpha_i\geq\varepsilon$. Then every $\alpha_i$ equals zero or one. The set $E=\{i:\alpha_i=1\}$ is finite, and all elements of $B$ are supported in $E$ and have coordinate moduli at most one.
\end{lemma}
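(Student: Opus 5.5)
The plan is to fix a coordinate $i$ and use the two halves of $B^2=B$ separately. Closure $B^2\subseteq B$ gives an upper bound on $\alpha_i$, and factorization $B\subseteq B^2$ gives a lower bound.

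\emph{Closure.} Since $B^2\subseteq B$, for every $x,y\in B$ we have $xy\in B$, hence $|x_iy_i|\leq\alpha_i$. Taking suprema over $x$ and $y$ separately gives $\alpha_i^2\leq\alpha_i$. Hence $\alpha_i\leq1$ or $\alpha_i=0$; in either case $\alpha_i\in[0,1]$.

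\emph{Factorization.} Since $B\subseteq B^2$, every $x\in B$ factors as $x=yz$ with $y,z\in B$, so $|x_i|=|y_i||z_i|\leq\alpha_i^2$. Taking the supremum over $x$ gives $\alpha_i\leq\alpha_i^2$. Combining the two inequalities yields $\alpha_i=\alpha_i^2$, so $\alpha_i\in\{0,1\}$. This step uses factorization only in this weak form. No iteration of roots is needed, because the supremum is already the extremal quantity.

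\emph{Finiteness and support.} Apply the tail hypothesis with $\varepsilon=1$: only finitely many $i$ satisfy $\alpha_i\geq1$, so $E$ is finite. If $i\notin E$, then $\alpha_i=0$, so $x_i=0$ for every $x\in B$. Thus every element of $B$ is supported in $E$, and $|x_i|\leq\alpha_i\leq1$ for all $i$. The only delicate point is the first step when $\alpha_i$ is a supremum that is not attained. This causes no trouble, since both inequalities pass to suprema directly: $\sup_{x,y}|x_i||y_i|=\alpha_i^2$ because the two factors vary independently. Nonemptiness of $B$ ensures the suprema are taken over a nonempty set. I expect no genuine obstacle; the main thing to check is that the argument uses both inclusions $B^2\subseteq B$ and $B\subseteq B^2$, and uses finiteness of each $\alpha_i$ to rule out $\alpha_i=\infty$ in the inequality $\alpha_i^2\leq\alpha_i$.
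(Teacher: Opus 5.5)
Your proof is correct and is essentially the paper's argument. The paper writes your two inequalities as the single identity $\alpha_i=\sup_{x,y\in B}|x_iy_i|=\alpha_i^2$, and your choice $\varepsilon=1$ in place of the paper's $\varepsilon=1/2$ works equally well, since every $\alpha_i$ is already known to lie in $\{0,1\}$.
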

\begin{proof}
Because every product of two elements lies in $B$ and every element of $B$ is such a product, $\alpha_i=\sup_{x,y\in B}|x_iy_i|=\alpha_i^2$, using independent choices of $x,y$. Since $\alpha_i$ is finite and nonnegative, it equals zero or one. Applying the uniform-tail hypothesis with $\varepsilon=1/2$ concludes the proof.
\end{proof}

\begin{theorem}[Bounded self-reproduction]\label{thm:bounded}
Let $\varnothing\ne B\subset\Ha$ satisfy $B\conv B=B$. Suppose that for some $0<b<a$ one has $\sup_{f\in B}p_b(f)<\infty$. Then there is a finite $E\subset\Z$ such that
\[
B\subset\left\{\sum_{n\in E}c_nz^n:|c_n|\leq1\right\}.
\]
\end{theorem}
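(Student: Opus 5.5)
The plan is to transfer the problem to Laurent coefficient sequences and apply Lemma~\ref{lem:uniformtail} with index set $I=\Z$. By Corollary~\ref{cor:decay}, the map $f\mapsto c(f)=(c_n(f))_{n\in\Z}$ is injective. By Theorem~\ref{thm:product}, it turns $\conv$ into coordinatewise multiplication. So the image $c(B)\subset\C^{\Z}$ is nonempty and satisfies $c(B)^2=c(B)$: every product $c(f)c(g)$ equals $c(f\conv g)$ with $f\conv g\in B$, and every $c(h)$ with $h\in B$ is such a product because $h=f\conv g$ for some $f,g\in B$.

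Next I will verify the two hypotheses of Lemma~\ref{lem:uniformtail}. Put $K=\sup_{f\in B}p_b(f)<\infty$. The first inequality of Lemma~\ref{lem:weights} gives $q_b(c(f))\le p_b(f)\le K$, that is,
\[
|c_n(f)|\le K e^{-b|n|}\qquad(f\in B,\ n\in\Z).
\]
Hence $\alpha_n=\sup_{f\in B}|c_n(f)|\le Ke^{-b|n|}$ is finite for every $n$. This single bound, uniform over the whole family, is the point where the boundedness hypothesis enters. Since $b>0$, for each $\varepsilon>0$ only the finitely many $n$ with $|n|\le b^{-1}\log(K/\varepsilon)$ can satisfy $\alpha_n\ge\varepsilon$. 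This is the uniform-tail condition.

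Lemma~\ref{lem:uniformtail} then yields a finite set $E=\{n:\alpha_n=1\}$. Every $c(f)$ with $f\in B$ vanishes off $E$ and has $|c_n(f)|\le1$. By Theorem~\ref{thm:representation}, each $f\in B$ equals its Laurent series, so $f=\sum_{n\in E}c_n(f)z^n$ with $|c_n(f)|\le1$. This is the claimed inclusion.

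There is no serious obstacle. The only step needing care is confirming that $c(B)^2=c(B)$ holds as an equality of sets, and not merely the inclusion $c(B)^2\subseteq c(B)$. This is exactly what the identity $\alpha_n=\alpha_n^2$ in Lemma~\ref{lem:uniformtail} requires. It follows because the coefficient map is a multiplicative bijection onto its image.
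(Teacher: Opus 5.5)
Your proposal is correct and follows the paper's proof. In both, the first inequality of Lemma~\ref{lem:weights} gives the uniform bound $\alpha_n\le Ke^{-b|n|}$, multiplicativity of the coefficient map turns $B\conv B=B$ into $c(B)^2=c(B)$, and Lemma~\ref{lem:uniformtail} together with Theorem~\ref{thm:representation} finishes. Two minor remarks: the paper's $M=\max(1,K)$ avoids your undefined $\log(K/\varepsilon)$ in the trivial case $K=0$, and injectivity is not needed for $c(B)^2=c(B)$, since any multiplicative map sends $B^2$ onto $c(B)^2$.
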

\begin{proof}
Lemma~\ref{lem:weights} yields $\alpha_n=\sup_{f\in B}|c_n(f)|\leq Me^{-b|n|}$ (where $M=\max(1,\sup_{f\in B}p_b(f))$). The coefficient map respects multiplication, giving uniform tails. Lemma~\ref{lem:uniformtail} applies, and the common support is finite.
\end{proof}

\medskip
\noindent\emph{Example.} Fix $n\in\Z$ and let $B_n=\{t z^n:0<t<1\}$.
Since $(s z^n)\conv(t z^n)=st z^n$, we have $B_n\conv B_n=B_n$.
The family is uniformly bounded on every compact subannulus,
but none of its members is an idempotent; moreover,
$\sup_{f\in B_n}|c_n(f)|=1$ is not attained.

\paragraph{A weaker factorization hypothesis.}
Under the same uniform boundedness assumption, $B\subseteq B\conv B$ alone forces a common finite Laurent support. Indeed, this inclusion gives $\alpha_n\leq\alpha_n^2$; since $\alpha_n\leq Me^{-b|n|}$, it follows that $\alpha_n=0$ for all sufficiently large $|n|$. The coefficient bound $|c_n(f)|\leq1$ uses the reverse inclusion.

The two-sided width is essential. The family
$B_-:=\{0\}\cup\{z^{-k}:k=0,1,2,\ldots\}$ satisfies
$B_-\conv B_-=B_-$, since $z^{-j}\conv z^{-k}=0$ for $j\ne k$
and $z^{-k}\conv z^{-k}=z^{-k}$. It is uniformly bounded by $1$
on $1\leq|z|\leq e^b$ for every $0<b<a$, but has no common finite Laurent support.

Without uniform boundedness, self-reproducing families need not have finite support.
For example, $f_t(z)=\sum_{n\in\Z}e^{-tn^2}z^n$ ($t>0$) has infinite
Laurent support and satisfies $f_s\conv f_t=f_{s+t}$ and
$f_t=f_{t/2}\conv f_{t/2}$, whereas $f_t(1)\to\infty$ as $t\downarrow0$.

\section{Primitive idempotents, ideals, and convolution operators}\label{sec:operators}

Idempotents $p,q$ are orthogonal if $p\conv q=0$. A nonzero idempotent is primitive if it is not the sum of two nonzero orthogonal idempotents. Monomials also play a central role in the study of Hadamard algebra homomorphisms; see \cite[Theorem~1.1]{RSiso}.

\begin{theorem}[Primitive convolution idempotents]\label{thm:primitive}
The primitive convolution idempotents are exactly $e_n(z)=z^n$, $n\in\Z$. For finite $E,F\subset\Z$, $P_E\conv P_F=P_{E\cap F}$. Every idempotent has, up to order, the unique finite decomposition into orthogonal primitive idempotents $P_E=\sum_{n\in E}e_n$.
\end{theorem}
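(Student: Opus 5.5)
Everything reduces to Laurent coefficients via Theorem~\ref{thm:product} and the injectivity in Corollary~\ref{cor:decay}. The plan is to work with indicator sequences throughout.

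First, the product formula. The coefficient sequence of $P_E$ is the indicator $\mathbf 1_E$. Coefficientwise multiplication gives $c_n(P_E\conv P_F)=\mathbf 1_E(n)\mathbf 1_F(n)=\mathbf 1_{E\cap F}(n)$, so by injectivity $P_E\conv P_F=P_{E\cap F}$. In particular, $P_E$ and $P_F$ are orthogonal exactly when $E\cap F=\varnothing$.

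Next, the classification of primitive idempotents. By Proposition~\ref{prop:idempotents}, every idempotent is some $P_E$ with $E$ finite, and it is nonzero iff $E\neq\varnothing$. Suppose $P_E=P_F+P_G$ with $P_F,P_G$ nonzero orthogonal idempotents. Comparing coefficients gives $\mathbf 1_E=\mathbf 1_F+\mathbf 1_G$, and orthogonality gives $F\cap G=\varnothing$. Hence $E=F\sqcup G$ with both parts nonempty, so $|E|\geq2$. Conversely, if $|E|\geq2$, split $E=\{n\}\sqcup(E\setminus\{n\})$ to get a nontrivial decomposition $P_E=e_n+P_{E\setminus\{n\}}$. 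Thus the primitive idempotents are exactly the $P_E$ with $|E|=1$, namely the $e_n$.

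Finally, existence and uniqueness of the decomposition. Existence is the identity $P_E=\sum_{n\in E}e_n$, and the $e_n$ are pairwise orthogonal since $e_m\conv e_n=\delta_{mn}e_n$. For uniqueness, suppose $P_E=\sum_{j=1}^k q_j$ with pairwise orthogonal primitive idempotents $q_j$. By the previous step each $q_j=e_{n_j}$. Orthogonality forces the $n_j$ to be distinct. Comparing coefficients then gives $\{n_1,\dots,n_k\}=E$, so the decomposition is unique up to order. The step needing the most care is the primitivity argument: one has to use the definition of orthogonality through $\conv$, not the ordinary pointwise product, and check that a sum of two idempotents can only be an idempotent with $0/1$ coefficients when the supports are disjoint. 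Here that is immediate because orthogonality is assumed, so no real obstacle is expected.
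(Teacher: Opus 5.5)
Your proof is correct and takes the same route as the paper. You use coefficient multiplication for $P_E\conv P_F=P_{E\cap F}$, use Proposition~\ref{prop:idempotents} to reduce every idempotent to some $P_E$, and then observe that orthogonal sums correspond to disjoint unions of supports, so the primitive supports are exactly the singletons; the difference is only that you also spell out the uniqueness step (distinctness of the $n_j$ and comparison of coefficients, including the empty case), which the paper leaves implicit.
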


\begin{proof}
Coefficient multiplication gives the intersection formula. By Proposition~\ref{prop:idempotents}, orthogonal sums correspond to disjoint unions of finite supports; primitive supports are therefore precisely the singletons. The zero idempotent has the empty decomposition.
\end{proof}

\begin{theorem}[Finite-dimensional ideals]\label{thm:ideals}
The finite-dimensional complex linear convolution ideals of $X$ are exactly $I_E=\spanC\{e_n:n\in E\}$ for finite $E\subset\Z$. Their convolution units are $P_E$. The minimal nonzero linear ideals are exactly $\C e_n$.
\end{theorem}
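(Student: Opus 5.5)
The plan is to transfer everything to the coefficient side with Theorem~\ref{thm:product}. The map $f\mapsto(c_n(f))_n$ is injective by Corollary~\ref{cor:decay}, linear, and turns $\conv$ into coordinatewise multiplication. So a complex linear subspace $I\subseteq X$ is a convolution ideal exactly when $\{c(f):f\in I\}$ is closed under coordinatewise multiplication by every sequence $c(g)$ with $g\in X$. Two facts are used repeatedly. First, $e_n\conv f=c_n(f)e_n$, since $c_m(e_n\conv f)=\delta_{mn}c_n(f)$. Second, the support of $f$ means $\operatorname{supp} f=\{n:c_n(f)\neq0\}$.

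First, each $I_E$ is an ideal. For $h=\sum_{n\in E}a_ne_n$ and $g\in X$ we get $h\conv g=\sum_{n\in E}a_nc_n(g)e_n\in I_E$. The space $I_E$ is finite-dimensional, and $P_E\conv h=h$ because $c_n(P_E)=1$ on $E$. Hence $P_E$ is a unit of $I_E$. It is the unique one, because if $u\in I_E$ satisfies $u\conv e_n=e_n$ for every $n\in E$, then $c_n(u)=1$ on $E$, so $u=P_E$.

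Conversely, let $I$ be a finite-dimensional linear ideal and put $E=\bigcup_{f\in I}\operatorname{supp} f$.
\begin{itemize}
\item For $f\in I$ and $n\in\operatorname{supp} f$, the element $e_n\conv f=c_n(f)e_n$ lies in $I$ and is nonzero, so $e_n\in I$.
\item The monomials $e_n$, $n\in E$, are linearly independent. Since $\dim I<\infty$, the set $E$ is finite.
\item Each $f\in I$ then has finite support in $E$, so by Theorem~\ref{thm:representation} it is the Laurent polynomial $\sum_{n\in E}c_n(f)e_n\in I_E$.
\end{itemize}
Together with $e_n\in I$ for $n\in E$, this gives $I=I_E$.

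The minimal nonzero ideals can be found without assuming finite dimension. Take a nonzero linear ideal $J$ and a nonzero $f\in J$, and pick $n$ with $c_n(f)\neq0$. As above, $e_n\in J$, so $\C e_n=I_{\{n\}}\subseteq J$. Therefore a minimal nonzero ideal equals some $\C e_n$. Each $\C e_n$ is itself minimal, since it is one-dimensional.

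The main point needing care is the extraction step $e_n\conv f=c_n(f)e_n$: it makes every ideal spanned by the monomials it contains. Beyond that, the only subtlety is using finite dimensionality to force $E$ finite.
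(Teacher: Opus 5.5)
Your proposal is correct and follows essentially the same route as the paper: you extract monomials via $f\conv e_n=c_n(f)e_n$, use finite dimensionality to force the common support $E$ to be finite, and read off the unit $P_E$ and the minimal ideals $\C e_n$ from coefficient multiplication. You are only more explicit than the paper's short proof, for instance in checking uniqueness of the unit and in proving minimality among all linear ideals.
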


\begin{proof}
For any ideal $I$, the identity $f\conv e_n=c_n(f)e_n$ shows that $I$ contains $e_n$ whenever some member has a nonzero $n$th coefficient. Finite dimension permits only finitely many such indices, giving $I=I_E$. The product rule gives the unit and minimality assertions (the zero ideal has unit $P_\varnothing=0$).
\end{proof}

Support descriptions of closed Hadamard ideals also occur in \cite{BM}; here Laurent truncation gives the classification directly.

\begin{theorem}[Closed-ideal classification]\label{thm:closedideals}
The closed complex linear convolution ideals of $X$ are exactly $I_S=\{f\in X:c_n(f)=0\text{ whenever }n\notin S\}$ for $S\subseteq\Z$. Equivalently, $I_S=\overline{\spanC\{e_n:n\in S\}}$, with closure taken in $X$.
\end{theorem}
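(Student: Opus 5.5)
The argument has two directions: first show that every $I_S$ is a closed ideal, then show that every closed ideal $I$ equals $I_S$ for the support set
\[
S=\{n\in\Z: c_n(f)\neq0\text{ for some } f\in I\}.
\]
The closure identity $I_S=\overline{\spanC\{e_n:n\in S\}}$ will come out of the same truncation argument.

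\textbf{Step 1: $I_S$ is a closed ideal.} Linearity of $I_S$ is clear. The ideal property follows from coefficient multiplication, since $c_n(f\conv g)=c_n(f)c_n(g)$ by Theorem~\ref{thm:product}, and this vanishes whenever $c_n(f)=0$. For closedness, note that each coefficient functional $f\mapsto c_n(f)$ is given by integration over the unit circle as in \eqref{eq:coefficient}. It is therefore continuous for the compact-open topology, since $|c_n(f)-c_n(g)|\le M_1(f-g)$. Hence $I_S$ is an intersection of kernels of continuous functionals, and so it is closed.

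\textbf{Step 2: truncation.} For $f\in I_S$, the Laurent truncations $T_Nf=\sum_{|n|\le N}c_n(f)e_n$ lie in $\spanC\{e_n:n\in S\}$. By Theorem~\ref{thm:representation}, they converge to $f$ uniformly on compact subannuli, hence in the topology of $X$. This gives $I_S\subseteq\overline{\spanC\{e_n:n\in S\}}$. The reverse inclusion holds because Step 1 shows $I_S$ is closed and it clearly contains each $e_n$ with $n\in S$.

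\textbf{Step 3: every closed ideal has this form.} Let $I$ be a closed linear ideal and let $S$ be its support set as above. Trivially $I\subseteq I_S$. For each $n\in S$, choose $f\in I$ with $c_n(f)\neq0$. The identity $f\conv e_n=c_n(f)e_n$ from the proof of Theorem~\ref{thm:ideals} gives $e_n\in I$. So $I$ contains $\spanC\{e_n:n\in S\}$, and because $I$ is closed, Step 2 yields $I_S\subseteq I$.

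The only point that needs care is the topological one. We must check that the compact-open topology on $\Ha$ makes the coefficient functionals continuous and makes truncations converge. Both follow from normal convergence on compact subannuli together with the bound \eqref{eq:coefficientbound}, so no step presents a real obstacle.
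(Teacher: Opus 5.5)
Your proposal is correct and follows essentially the same route as the paper. Closedness of $I_S$ comes from continuity of the coefficient functionals via $|c_n(f)|\le M_1(f)$. The identity $f\conv e_n=c_n(f)e_n$ puts each $e_n$, $n\in S$, into $I$, and Laurent truncation recovers all of $I_S$. Your separate Step 2 only makes the closure identity $I_S=\overline{\spanC\{e_n:n\in S\}}$ explicit, which the paper leaves implicit in the same truncation argument.
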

\begin{proof}
The bound $|c_n(f)|\leq M_1(f)$ makes each coefficient functional continuous, so $I_S$ is closed. Conversely, if $I$ is a closed ideal and $S=\{n:c_n(f)\ne0\text{ for some }f\in I\}$, then $e_n=c_n(f)^{-1}(f\conv e_n)\in I$ for each $n\in S$. For $g\in I_S$, its finite sums $\sum_{|n|\leq N}c_n(g)e_n$ belong to $I$ and converge uniformly to $g$ on compact subsets.
\end{proof}

\begin{theorem}[Finite-rank convolution operators]\label{thm:operators}
For $p\in X$, define $T_p f=p\conv f$. Then $T_p$ has finite rank if and only if $p$ is a finite Laurent polynomial. The nonzero rank-one convolution projections are exactly $T_{e_n}$.
\end{theorem}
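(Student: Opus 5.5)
The plan is to work entirely on the coefficient side, using $c_n(T_pf)=c_n(p)c_n(f)$ from Theorem~\ref{thm:product}. Put $S=\{n:c_n(p)\neq0\}$. By Corollary~\ref{cor:decay} the coefficient map is injective, so $T_p$ is a linear map whose image is determined by the sequences $(c_n(p)c_n(f))_n$. The key observation is that $T_pe_n=c_n(p)e_n$. Thus each $e_n$ with $n\in S$ is an eigenvector of $T_p$ with nonzero eigenvalue, and so $e_n$ lies in the range of $T_p$.

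\textbf{Rank characterization.} Suppose first that $T_p$ has finite rank. Then the range contains the linearly independent family $\{e_n:n\in S\}$, so $S$ is finite. Hence $p=\sum_{n\in S}c_n(p)e_n$ is a Laurent polynomial, by the uniqueness in Theorem~\ref{thm:representation}. Conversely, suppose $p$ is a Laurent polynomial with finite support $S$. Then $T_pf=\sum_{n\in S}c_n(p)c_n(f)e_n$ lies in $\spanC\{e_n:n\in S\}$, so the rank is at most $|S|$. In fact the rank equals $|S|$, by the eigenvector observation. This identity $\operatorname{rank}T_p=|S|$ will also be used below.

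\textbf{Rank-one projections.} Suppose $T_p$ is a nonzero projection, i.e.\ $T_p\circ T_p=T_p$. Apply this to $e_n$, or compare coefficients using $T_p\circ T_p=T_{p\conv p}$ by associativity. This gives $c_n(p)^2=c_n(p)$ for all $n$, so $p$ is an idempotent $P_E$ by Proposition~\ref{prop:idempotents}. Since $\operatorname{rank}T_{P_E}=|E|$, the rank-one case is exactly $|E|=1$, that is, $p=e_n$. Conversely, $T_{e_n}f=c_n(f)e_n$ is idempotent because $e_n\conv e_n=e_n$, and it has rank one.

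\textbf{Main subtlety.} The only step needing care is the meaning of ``convolution projection''. I take it to mean an operator of the form $T_p$ that is idempotent as an operator. I would state this interpretation explicitly. Beyond that, everything reduces to the eigenvector identity, so I expect no real obstacle.
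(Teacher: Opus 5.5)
Your proposal is correct and follows essentially the same route as the paper. Both arguments rest on $T_pe_n=c_n(p)e_n$: infinite support gives infinite rank, finite support gives range equal to the monomial span, and a rank-one projection forces a single coefficient equal to $1$. Your only change is applying idempotence before the rank count, via Proposition~\ref{prop:idempotents}, which is an inessential reordering.
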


\begin{proof}
Since $T_pe_n=c_n(p)e_n$, infinitely many nonzero coefficients give infinite rank. If the support is finite, the range is exactly its monomial span. Rank one and $T_p^2=T_p$ then require a single coefficient, equal to $1$.
\end{proof}

\medskip
\noindent The primitive idempotents also give the usual Laurent reconstruction.
For $E_N=\{-N,\ldots,N\}$ and every $f\in X$,
\[
 f=\lim_{N\to\infty} f\conv P_{E_N}
   =\sum_{n\in\Z}(f\conv e_n),
\]
with convergence uniform on compact subsets of $\Om$. This is the Laurent
expansion expressed through the coefficient projections.

\section*{Statements and Declarations}

\paragraph{Funding.}
No funding was received for this research.

\paragraph{Competing interests.}
The author declares that he has no competing interests.

\paragraph{Data availability.}
No datasets were generated or analyzed during the current study.

\paragraph{Declaration of Generative AI Use.}
Once the first draft of the manuscript was completed, generative AI tools (specifically ChatGPT) were employed strictly as supplementary aids to improve English readability, refine LaTeX formatting, identify potentially relevant references.
To be clear, all mathematical discoveries, conceptual developments, and scientific decisions---from the initial draft through the final revisions were made exclusively by the author.

\end{document}